\documentclass[11pt,twocolumn]{article}
\usepackage{amsthm}

\usepackage{graphicx}

\usepackage[english]{babel}
\usepackage{amssymb}
\usepackage{amsmath}
\usepackage{enumerate}
\usepackage{url}
\usepackage{color}
\usepackage{epsfig}
\usepackage{subfigure}
\usepackage{ifpdf}
 
%\usepackage{natbib,hyperref}

%\renewcommand{\cite}{\citet}

%Math Operators

\DeclareMathOperator    \conv           {conv}
\DeclareMathOperator    \cone           {cone}

 % cone of feasible directions (a cone)

\DeclareMathOperator    \proj           {proj}
\DeclareMathOperator    \rec                    {rec}

 % tangent cone (an affine cone)
\DeclareMathOperator    \verts          {vert}

%Command Shortcuts
%%\newcommand{\ol}{\overline}

\newcommand{\bb}{\mathbb}
\newcommand{\old}[1]{{}}

%Mathbb
\newcommand{\R}{\bb R}
\newcommand{\Q}{\bb Q}
\newcommand{\Z}{\bb Z}

%Constructed Commands

%Theorem Environments
\newtheorem{theorem}{Theorem}

\newtheorem{lemma}[theorem]{Lemma}

\newtheorem{remark}[theorem]{Remark}

\newtheorem{example}[theorem]{Example}

%%%%%%%%%%%%%%%%%%%%%%%%% 

%          SPECIFIC COMMANDS FOR THIS PAPER        %

%%%%%%%%%%%%%%%%%%%%%%%%%

\addtolength{\oddsidemargin}{-30pt}
\addtolength{\evensidemargin}{-30pt}
\addtolength{\textwidth}{60pt}

\addtolength{\topmargin}{-22pt}
\addtolength{\textheight}{32pt}

\renewcommand{\varepsilon}{\epsilon} 

\usepackage[normalem]{ulem}

\providecommand{\keywords}[1]{\textbf{\textit{Keywords---}} #1}

\begin{document}

\title{Note on the Complexity of the Mixed-Integer Hull of a Polyhedron}

\author{Robert Hildebrand%\thanks{robert.hildebrand@ifor.math.ethz.ch}
\and
Timm Oertel%\thanks{timm.oertel@ifor.math.ethz.ch} 
\and
Robert Weismantel%\thanks{robert.weismantel@ifor.math.ethz.ch}\\
%Institute for Operations Research, Department of Mathematics, ETH Z\"urich\\ R\"amistrasse 101, 8092 Z\"urich, Switzerland
}

\date{\today}

%\twocolumn[ 
\maketitle
  %\begin{@twocolumnfalse}
  
    \begin{abstract}
    We study the complexity of computing the mixed-integer hull $\conv(P\cap(\mathbb{Z}^n\times\mathbb{R}^d))$ of a polyhedron $P$.
Given an inequality description, with one integer variable, the mixed-integer hull can have exponentially many vertices and facets in $d$.  For $n,d$ fixed, we give an algorithm to find the mixed-integer hull in polynomial time. 
    Given a finite set $V \subseteq \Q^{n+d}$,  with $n$ fixed, we compute a vertex description of the mixed-integer hull of $\conv(V)$ in polynomial time and give bounds on the number of vertices of the mixed-integer hull.
    \end{abstract}
    
        \keywords{Mixed-integer hull, polyhedron, mixed-integer concave minimization}
        
 % \end{@twocolumnfalse}
  
%]

        \section{Introduction}

Given a rational polyhedron $P \subseteq \R^n \times \R^d$, we focus on computing the mixed-integer hull $P_{MI} = \conv(P \cap (\Z^n \times \R^d))$.   The mixed-integer hull is a fundamental object in mixed-integer linear programming and is well known to be a polyhedron~\cite{schrijver_theory_1986}.  
In 1992, Cook, Kannan, Hartman, and McDiarmid~\cite{cook_integer_1992} showed that the integer hull of $P_I = \conv( P \cap \Z^{n})$ has at most $2m^n(6n^2 \varphi)^{n-1}$ many vertices, where $m$ is the number of facets of $P$ and $\varphi$ is the maximum binary encoding size of a facet.  Hartman~\cite{hartmann-1989-thesis} gave a polynomial time algorithm in fixed-dimension to enumerate the vertices of the integer hull of a polyhedron.  
On the other hand,~\cite{lowerbound} provides examples where the number of vertices is as large as $c_n \varphi^{n-1}$, where $c_n$ is a constant depending only on $n$.  This indicates that the upper bounds are of the right order.
  See~\cite{zolotykh-2006} for a survey of these results and improvements, and also~\cite{charles-howe-king-2009} for a discussion of implementations. 
As far as we know, no similar results have been shown for the mixed-integer hull.

In Section~\ref{subsec:reduction-to-polytope}, we reduce the original task of computing the mixed-integer hull  of a polyhedron to the special case of polytopes by writing an extended formulation using a Minkowski-Weyl type decomposition of $P$.  Hence we proceed under the assumption that $P$ is bounded.
In the following sections we give two algorithms, based on different input descriptions, to compute the mixed-integer hull of a polytope; each algorithm produces a bound for the number of vertices of the mixed-integer hull.  

In Section~\ref{sec:two}, we consider $P$ given as an inequality description.  We show that even with one integer variable and the restriction of small subdeterminants of the inequality description, the mixed-integer hull can have exponentially many facets and vertices if $d$ varies.  Hence, we fix both the number of continuous and integer variables.  Through a simple scaling technique we can apply the results of~\cite{cook_integer_1992} and~\cite{hartmann-1989-thesis} to compute the integer hull of a scaled polyhedron.  By scaling back, we obtain the mixed-integer hull.  This leads to a bound on the number of vertices that is exponential in $n+d$.
  
In Section~\ref{sec:three}, we consider $P$ given by a list of vertices.  In this setting, we can allow $d$ to vary.  We show how to compute a vertex description of $P_{MI}$ in polynomial time in the encoding size of the vertices provided that the number of integer variables is fixed.  This algorithm implies a better bound on the number of vertices of the mixed-integer hull that depends on the number of vertices of the original polytope and does not depend on the number of continuous variables $d$. 
This algorithm also implies an algorithm for concave minimization over the mixed-integer points in a polytope since at least one solution lies at an extreme point.  
\begin{theorem}[Mixed-integer concave minimization]
Let $V \subseteq \Q^{n+d}$ be finite, $P = \conv(V)$, and $f\colon \R^{n+d} \to \R$ be concave.  When $n$ is fixed, the problem $\min\{ f(x,y) : (x,y) \in P \cap (\Z^n \times \R^d)\}$ can be solved in polynomial time in the evaluation time of $f$, $d$,$|V|$ and $\nu$, where $\nu$ is the maximum binary encoding size of a point in $V$.
\label{thm1}
\end{theorem}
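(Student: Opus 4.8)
The plan is to reduce the mixed-integer concave minimization to a finite enumeration over the vertices of the mixed-integer hull $P_{MI} = \conv(P \cap (\Z^n \times \R^d))$, and then to invoke the vertex-enumeration algorithm of Section~\ref{sec:three}.

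The starting point is the extremal principle for concave functions: a concave $f$ attains its minimum over a polytope at one of the vertices. Since $P = \conv(V)$ is a polytope and $\Z^n \times \R^d$ is closed, the set $S = P \cap (\Z^n \times \R^d)$ is compact, so $P_{MI} = \conv(S)$ is a polytope whose vertices all lie in $S$ (the extreme points of the convex hull of a compact set belong to that set). Hence I would first establish
\[
\min\{f(x,y) : (x,y) \in S\} \;=\; \min\{f(v) : v \in \verts(P_{MI})\}.
\]
The inequality ``$\le$'' holds because the concave $f$ attains its minimum over $P_{MI} \supseteq S$ at a vertex of $P_{MI}$, and that vertex lies in $S$; the inequality ``$\ge$'' holds because $\verts(P_{MI}) \subseteq S$, so the right-hand side minimizes over a subset of the feasible region.

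With this equality in hand the algorithm is immediate: apply the Section~\ref{sec:three} algorithm to compute a vertex description of $P_{MI}$, which for fixed $n$ runs in polynomial time in $d$, $|V|$, and $\nu$ and produces polynomially many vertices; then evaluate $f$ at each vertex and return the smallest value. The total cost is the Section~\ref{sec:three} running time plus one evaluation of $f$ per vertex, hence polynomial in the evaluation time of $f$, $d$, $|V|$, and $\nu$.

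The main obstacle is not located in this theorem, which is essentially a corollary of the Section~\ref{sec:three} construction; all of the work sits in the two facts I am borrowing: (i) that, for fixed $n$, a vertex description of $P_{MI}$ together with a polynomial bound on its number of vertices can be computed in polynomial time, and (ii) that concavity forces the optimum onto a vertex of $P_{MI}$ rather than into its interior. Ingredient (i) is precisely the content of Section~\ref{sec:three}, while ingredient (ii) is the standard extremal principle; once both are secured, the reduction and enumeration are routine.
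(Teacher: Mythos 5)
Your proposal is correct and follows essentially the same route as the paper: Theorem~\ref{thm1} is derived there as a direct consequence of the Section~\ref{sec:three} algorithm (Theorem~\ref{thm:vertex-description}), using precisely the observation that a concave function attains its minimum over $P \cap (\Z^n \times \R^d)$ at an extreme point of $P_{MI}$, which belongs to the feasible set. Your additional care in justifying that $\verts(P_{MI}) \subseteq P \cap (\Z^n \times \R^d)$ via compactness is a welcome refinement of the paper's one-line justification, not a departure from it.
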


Concave minimization over polyhedra presented by an inequality description is NP-Hard when the dimension varies, even for the case of minimizing a concave quadratic function over the continuous points in a cube.  This is because every extreme point can be a local minimum.  Most exact algorithms require in the worst case to enumerate all extreme points of the feasible region~\cite{concave-survey1986} which can be of exponential size in the number of inequalities.  In order to cope with these complexity results, it appears natural to assume that the underlying polyhedron is  presented by means of its vertices.  From the viewpoint of optimization, Theorem~\ref{thm1} is the mixed-integer analogue of integer concave minimization based on integer hull computations in fixed dimension for which vertex complexity and inequality complexity of the polyhedron are polynomial time equivalent.

\paragraph{Notation:}
For a set $Q \subseteq \R^n \times \R^d$, we define $Q_{\hat x} = \{(x,y) \in Q :  x = \hat x\} \subseteq \R^n \times \R^d$ and $\proj_x(Q) \subseteq \R^n$ as the projection of $Q$ onto the first $n$ variables.

%%%%%%%%
\section{Reduction to bounded polyhedra}
\label{subsec:reduction-to-polytope}
We begin by reducing the problem to finding the mixed-integer hull of a polytope.
We adapt a result of Nemhauser and Wolsey~\cite[Section I.4, Theorem 6.1]{Nemhauser-Wolsey-1988} to the mixed-integer case.  The proof is essentially the same, but we provide it here to obtain complexity bounds.
Recall that by the Minkowski-Weyl theorem, a polyhedron $P$ can be represented as an inequality description or an inner description, that is, $P = \{x \in \R^{n+d} : Ax \leq b\} = \conv(V) + \cone(W)$ for some finite sets $V \subseteq \Q^{n+d}$ and $W \subseteq \Z^{n+d}$.   We denote the recession cone of $P$ by $\rec(P) := \{x \in \R^{n+d} : Ax \leq 0\} = \cone(W)$.
In the following, we request that $P \cap (\Z^n \times \R^d)$ is non-empty, which can be tested in polynomial time provided that $n$ is fixed using Lenstra's algorithm~\cite{lenstra_integer_1983}.

\begin{lemma}[Relevant mixed-integer points]
\label{lem:reduce-to-polytope}
Let $P$ be a rational polyhedron given by the inner description $\conv(V) + \cone(W)$ for finite sets $V \subseteq \Q^{n+d}$ and $W \subseteq \Z^{n+d}$  (resp. an inequality description $\{ x \in \R^{n+d} : Ax \leq b\}$ for $A \in \Q^{m \times (n+d)}$ and $b \in \Q^m$).  Suppose that $P \cap (\Z^n \times \R^d) \neq \emptyset$.  In polynomial time in the input, we can compute a vertex description (resp. an inequality description) of a  rational polytope $Q$ of polynomial size such that 
$P = Q + \rec(P)$ and 
$$
P_{MI} =  \conv(Q \cap (\Z^n \times \R^d)) + \rec(P).
$$
\end{lemma}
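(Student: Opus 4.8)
The plan is to exhibit an explicit polytope $Q$ sandwiched between the ``box part'' of the Minkowski--Weyl decomposition and $P$ itself, and then to reduce both asserted identities to this sandwich condition. Write $B_W := \{\sum_{w\in W}\lambda_w w : 0\le \lambda_w\le 1\}$ for the zonotope generated by $W$; since $0\in B_W\subseteq\cone(W)$ we have $B_W+\cone(W)=\cone(W)$. I claim that any rational polytope $Q$ with
$$\conv(V)+B_W\ \subseteq\ Q\ \subseteq\ P \quad\text{and}\quad Q+\rec(P)=P$$
already satisfies the conclusion. The work then splits into (a) verifying this implication and (b) producing such a $Q$ of polynomial size in each of the two input formats.

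For (a), set $X:=Q\cap(\Z^n\times\R^d)$. For $P_{MI}\subseteq\conv(X)+\rec(P)$, take a mixed-integer point $p\in P$ and write $p=v+\sum_{w}\mu_w w$ with $v\in\conv(V)$ and $\mu_w\ge 0$; splitting $\mu_w=\floor{\mu_w}+(\mu_w-\floor{\mu_w})$ gives $p=q_0+z$ where $q_0:=v+\sum_w(\mu_w-\floor{\mu_w})w\in\conv(V)+B_W\subseteq Q$ and $z:=\sum_w\floor{\mu_w}w\in\cone(W)\cap\Z^{n+d}$. Since $z$ is integral and $p\in\Z^n\times\R^d$, the point $q_0=p-z$ again lies in $\Z^n\times\R^d$, so $q_0\in X$ and $p\in X+\rec(P)$; taking convex hulls and using $\conv(X+\cone(W))=\conv(X)+\cone(W)$ yields the inclusion. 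For the reverse inclusion, note first that $\rec(P)\subseteq\rec(P_{MI})$: for any $q\in X\subseteq P\cap(\Z^n\times\R^d)$ and any generator $w\in W\subseteq\Z^{n+d}$, the points $q+kw$ are mixed-integer and lie in $P$ for every integer $k\ge 0$, so $w$ is a recession direction of the polyhedron $P_{MI}$. As $X\subseteq P_{MI}$ and $P_{MI}$ is convex, every $\sum_i\alpha_i q_i+r$ with $q_i\in X$ and $r\in\cone(W)$ equals $\sum_i\alpha_i(q_i+r)\in P_{MI}$, giving $\conv(X)+\rec(P)\subseteq P_{MI}$.

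For (b), the only real issue is size, since the naive choice $Q=\conv(V)+B_W$ has up to $|V|\cdot 2^{|W|}$ vertices. When $P$ is given by the inner description, I replace the zonotope by the simplex $S_W:=\conv(\{0\}\cup\{Nw:w\in W\})$ with $N:=|W|$; then $B_W\subseteq S_W\subseteq\cone(W)$, so $Q:=\conv(V)+S_W$ meets the sandwich condition, and its $|V|\,(|W|+1)$ generators $\{v+s : v\in V,\ s\in\{0\}\cup\{Nw:w\in W\}\}$ furnish a vertex description of polynomial size, computable in polynomial time. When $P$ is given by the inequality description $\{Ax\le b\}$, I instead truncate by a box, $Q:=\{x : Ax\le b,\ -M\le x_i\le M\}$. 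By standard Cramer-type bounds, the vertices $v$ and extreme rays $w$ of $P$ have entries bounded by $2^{\poly(\mathrm{size}(A,b))}$, and the number of extreme rays is at most $m^{\,n+d}$; hence one can fix an explicit $M$ with $\log M=\poly(\mathrm{size}(A,b))$ and $M\ge\max_v\|v\|_\infty+\sum_w\|w\|_\infty$, so that $\conv(V)+B_W\subseteq Q\subseteq P$. The computation in (a) shows $P\subseteq Q+\rec(P)$, whence $Q+\rec(P)=P$, and $Q$ has the $m+2(n+d)$ inequalities above.

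The conceptual heart, part (a), is essentially the Nemhauser--Wolsey decomposition and is routine; the genuine obstacle, befitting a complexity note, is part (b). I expect the delicate point to be certifying polynomial size without ever enumerating $V$ and $W$: for the inequality input one must bound $M$ through a priori determinant bounds rather than by listing the possibly exponentially many extreme rays, and the key observation making this work is that although $|W|$ may be exponential, $\log\!\big(\sum_w\|w\|_\infty\big)$ — and hence $\log M$ — remains polynomial; dually, for the inner input one must avoid the exponential vertex count of the zonotope, which the simplex substitution $S_W$ achieves.
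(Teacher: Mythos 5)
Your proposal is correct and follows essentially the same route as the paper: the Nemhauser--Wolsey floor-splitting of $\sum_w \mu_w w$ to move any mixed-integer point of $P$ into a bounded ``fractional part'' polytope, realized as $\conv(V)$ plus a scaled copy of $\conv(W\cup\{0\})$ for vertex input, and as a box truncation $P\cap[-M,M]^{n+d}$ with polynomially encodable $M$ for inequality input. The only (immaterial) difference is that the paper first applies Carath\'eodory's theorem to restrict to at most $n+d$ ray generators, allowing the scaling factor $n+d$ and the box radius $(n+d+1)R$, whereas you keep the full zonotope $B_W$ and compensate with the factor $N=|W|$ and the bound $M\geq\max_v\|v\|_\infty+\sum_w\|w\|_\infty$, which stays of polynomial encoding size since $|W|\leq m^{n+d}$.
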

\begin{proof}
By the Minkowski-Weyl theorem, we can assume that $P$ is given as $P = \conv(V) + \cone(W)$ for some finite subsets  $V\subseteq \Q^{n+d}$ and $W \subseteq \Z^{n+d}$.  
For any $x \in P \cap (\Z^n \times \R^d)$,  by this decomposition and Carath\'eodory's theorem we can write
\begin{align*}
x &= \sum_{i \in I} \lambda_i v_i + \sum_{j \in J} \mu_j w_j\\
 &= \sum_{i \in I} \lambda_i v_i + \sum_{j \in J} (\mu_j - \lfloor \mu_j \rfloor) w_j +  \sum_{j \in J} \lfloor \mu_j \rfloor w_j,
\end{align*}
where $\lambda_i, \mu_j \geq 0$, $\sum_{i \in I} \lambda_i = 1$, $v_i \in V$, $w_j \in W$, $|I| \leq n+d+1$ and $|J| \leq n+d$. 
Since $x \in \Z^n \times \R^d$ and  $\sum_{j \in J} \lfloor \mu_j \rfloor w_j \in \Z^{n\times d}$, we have that $x -  \sum_{j \in J} \lfloor \mu_j \rfloor w_j = \sum_{i \in I} \lambda_i v_i + \sum_{j \in J} (\mu_j - \lfloor \mu_j \rfloor) w_j  \in \Z^n \times \R^d$.

Therefore, if we define $T := \conv(V) + (n+d) \conv(W \cup \{0\})$, then $x -  \sum_{j \in J} \lfloor \mu_j \rfloor w_j   \in T \cap (\Z^n \times \R^d)$.  
It follows that 
$$ 
\conv(P \cap (\Z^n \times \R^d)) = \conv(T \cap (\Z^n \times \R^d)) + \rec(P).
$$
If $V$ and $W$ are given as input, then we are done by setting $Q = T$ for which we obtain a vertex description by taking the Minkowski sum $V + (n+d) W$.  On the other hand, if we are given as input an inequality description of $P$, then  the descriptions of $V$ and $W$ may be exponential in the input.  In this case, we instead determine a box that contains $T$ and intersect that box with $P$ to obtain our choice of $Q$.

More precisely, by for instance~\cite{schrijver_theory_1986}, we can choose $V$ and $W$ such that there exists an $R\geq 0$  of polynomial encoding size that is an upper bound on the infinity norm of the elements in $V$ and $W$. Setting $R' = (n+d + 1)R$, we have $T \subseteq B := [-R', R']^{n\times d}$.  Setting $Q = P \cap B$ finishes the argument.
\end{proof}

\section{Mixed-integer hull from inequalities}
\label{sec:two}

We study the problem of computing the mixed-integer hull of a polytope $P$ when we are given an inequality description of $P$.
Unfortunately, in this setting, it may be impossible to give a compact facet or vertex representation of the mixed-integer hull if $d$ is allowed to vary. This is demonstrated in the following example that has only one integer variable.  Notice that in the example, the maximum subdeterminant in absolute value of $A$ is two.  
   
\begin{example}
\label{example1}
Let $P = R \prod_{i=1}^{d+1} [-b_i,b_i] = \{x \in \R^{d+1} : -b_i \leq A_{i \cdot} x \leq b_i, \ \forall\ i\}$ be the linear transformation of the hypercube with 
$$
\begin{array}{cc}
R = 
\begin{bmatrix}
\tfrac12 &   \tfrac12    & \dots &\tfrac12\\
0 &  &  I_d
\end{bmatrix},
& 
A = R^{-1} =  
\begin{bmatrix}
2 & -1 & \dots & -1\\ 
0 &   & I_d
\end{bmatrix},
\end{array} 
$$
where $I_d$ is the $d\times d$ identity matrix.  Choosing odd numbers $b_i = 2^i + 1$,  the first coordinates of neighboring vertices of $P$ are contained in the interiors of non-adjacent unit intervals. It follows that the mixed-integer hull $P_{MI} = \conv( P \cap  \Z \times \R^d)$ does not contain any of the vertices of $P$.  In particular, it can be shown that every vertex of $P$ violates a distinct facet defining inequality of $P_{MI}$.  
  See Figure~\ref{fig:one}.
\end{example}
\begin{figure} 
\centering
\includegraphics[scale = .6]{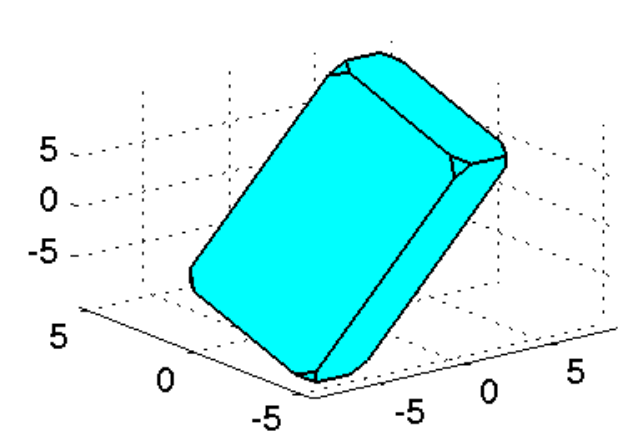} \ \ \ \ \includegraphics[scale = 0.6]{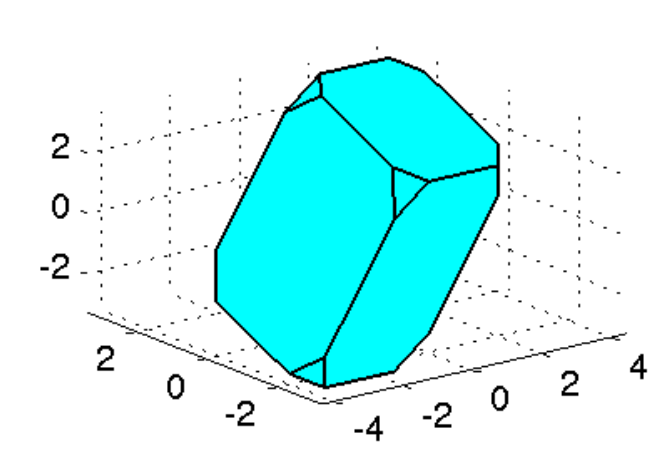}
\caption{ Left: The mixed-integer hull of the rotated cube in Example~\ref{example1} with $d +1 = 3$ and $b_i = 2^i + 1$. 
Right: The example with $b_i = 3$ mentioned in Remark~\ref{rem1}.  In both cases, every vertex of $P$ has been cut off, resulting in exponentially many facets and vertices.  }
\label{fig:one}
\end{figure}

\begin{remark}
\label{rem1}
In Example~\ref{example1}, if we instead choose $b_i = 3$, the mixed-integer hull $P_{MI}$ will still have exponentially many vertices and facets.  In this case though, the first coordinate of all vertices is polynomially bounded by $d$, which allows one to write an extended formulation of $P_{MI}$ as the convex hull of the union of polynomially many integer fibers.  By~\cite{Balas1983}, this leads to a polynomial size representation.  Note that it could still be possible that for the case of $b_i = 2^i + 1$ there is a polynomial size extended formulation for $P_{MI}$.
\end{remark}

In view of Example~\ref{example1}, we first restrict ourselves to the setting where $n,d$ are fixed.
We present a scaling algorithm that can be used to apply previous results on computing the integer hull.  This technique is similar to~\cite{cook-kannan-schrijver-1990} where they show that the mixed-integer split closure of a polyhedron is again a polyhedron by a scaling argument to transform the mixed-integer linear program to an integer linear program.

Let $P = \{(x,y) \in \R^n \times \R^d : (A_1, A_2) (x,y) \leq b \}$ be a polytope with $A_1 \in \Z^{m\times n}, A_2 \in \Z^{m\times d}$ and $ b \in \Z^{m}$.
We compute the mixed-integer hull $
P_{MI} = \conv(P \cap (\Z^n \times \R^d))
$
in time polynomial in the binary encoding size of $A_1,A_2$ and $b$. 
The output is either a facet or a vertex description of $P_{MI}$.  Since $n,d$ are both fixed, the facet and vertex descriptions are polynomial time equivalently computable (see, for instance,~\cite{schrijver_theory_1986}).

 For a polytope $Q\subseteq \R^n \times \R^d$, define  
$
Q^t := \{(x,t y) \in \R^n \times \R^d : (x,y)\in Q\}. 
$
Therefore, $Q^1 = Q$.
Notice that we can write
\begin{align*}
P^t &= \{(x,y) \in \R^n \times \R^d : (A_1, A_2) (x,\tfrac{1}{t} y) \leq b\}\\
&= \{(x,y) \in \R^n \times \R^d : (A_1, \tfrac{1}{t} A_2) (x,y) \leq  b\}.
\end{align*}
  
\begin{theorem}[Mixed-integer hull from inequality description]
\label{thm:alg1}
The mixed-integer hull $P_{MI}$ can be computed in time polynomial in the binary encoding size of $A_1, A_2, b$, provided that $n$ and $d$ are fixed.  Furthermore, let $\phi$ be the maximum binary encoding size of a row of $(A_1, A_2, b)$.  Then $|\verts(P_{MI})|\leq 2m^{n+d} (6(n+d)^2 \varphi)^{n+d-1}$ where $\varphi = \phi + n\phi(m+n)^{n+d}$.
\end{theorem}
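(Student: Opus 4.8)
\section*{Proof proposal}

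The plan is to reduce the mixed-integer hull computation to an \emph{ordinary} integer hull computation in the full dimension $n+d$ by scaling the continuous coordinates heavily enough that every relevant fractional coordinate becomes an integer, and then to invoke the fixed-dimension results of Cook, Hartmann, Kannan and McDiarmid~\cite{cook_integer_1992} together with Hartmann's algorithm~\cite{hartmann-1989-thesis} on the scaled polytope $P^t$. Throughout I may assume $P$ is a polytope, by the reduction of Lemma~\ref{lem:reduce-to-polytope}.

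First I would establish the structural fact that controls the denominators: every vertex $(x^*,y^*)$ of $P_{MI}$ satisfies $x^*\in\Z^n$ and, moreover, $y^*$ is a vertex of the fiber polytope $P_{x^*}=\{y : A_2 y \le b - A_1 x^*\}$. Indeed, since $(x^*,y^*)$ is an extreme point of $P_{MI}=\conv(P\cap(\Z^n\times\R^d))$, it must itself lie in $P\cap(\Z^n\times\R^d)$, forcing $x^*\in\Z^n$; and because the entire fiber $P_{x^*}$ is contained in $P_{MI}$, the point $(x^*,y^*)$ stays extreme in $P_{x^*}$, so $y^*$ is a genuine vertex of $\{y: A_2 y \le b - A_1 x^*\}$. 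By Cramer's rule such a $y^*$ has coordinates whose denominators divide some $d\times d$ subdeterminant of $A_2$.

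This motivates choosing $t$ to be a common multiple (say the least common multiple, or the product) of all nonzero $d\times d$ subdeterminants of $A_2$. By Hadamard's inequality each such subdeterminant has binary encoding size polynomial in $\phi$, and there are only polynomially many of them, so $t$ has polynomial encoding size; this is the step that produces the bound $\varphi = \phi + n\phi(m+n)^{n+d}$, since clearing denominators turns $P^t$ into the integer system $(tA_1, A_2)(x,y)\le tb$, and scaling the row of $A_1$ and the right-hand side by $t$ inflates the encoding size of each facet from $\phi$ to $\varphi$. With this $t$, the structural lemma shows that every vertex of $(P_{MI})^t$ lies in $\Z^{n+d}$, so $(P_{MI})^t$ has only integral vertices; combined with the trivial inclusion $(P^t)_I\subseteq (P_{MI})^t$ coming from $\Z^{n+d}\subseteq\Z^n\times\R^d$, I would obtain the key identity
$$ (P_{MI})^t = (P^t)_I = \conv(P^t \cap \Z^{n+d}). $$

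Finally I would run Hartmann's fixed-dimension algorithm~\cite{hartmann-1989-thesis} on the integer description of $P^t$ to compute $(P^t)_I$ in polynomial time, and undo the scaling via the linear bijection $(x,y)\mapsto(x,y/t)$ to recover $P_{MI}$. Since a linear bijection carries vertices to vertices, $|\verts(P_{MI})| = |\verts((P^t)_I)|$, and substituting the $m$ facets, the dimension $n+d$, and the facet encoding size $\varphi$ of $P^t$ into the vertex bound of~\cite{cook_integer_1992} yields exactly $2m^{n+d}(6(n+d)^2\varphi)^{n+d-1}$. I expect the main obstacle to be the bookkeeping in the third step: verifying that a \emph{single} scaling factor $t$ of polynomial encoding size simultaneously clears the denominators of all fiber vertices across every integral $x^*$, and tracking precisely how $\log t$ propagates into the facet encoding size $\varphi$ of the scaled polytope so that the stated bound comes out.
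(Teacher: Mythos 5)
Your proposal is correct and takes essentially the same route as the paper: show each vertex $(\hat x,\hat y)$ of $P_{MI}$ is a vertex of the fiber $P_{\hat x}$, use Cramer's rule to bound its denominators, scale the continuous variables by a product $t$ of the relevant determinants so that $(P_{MI})^t=(P^t)_I$, compute $(P^t)_I$ with Hartmann's fixed-dimension algorithm, scale back, and read off the vertex bound from Cook et al.\ with the inflated encoding size $\varphi$. The only cosmetic difference is that you define $t$ via the nonzero $d\times d$ subdeterminants of $A_2$ directly, whereas the paper takes the product of $\det(A_B)$ over bases of the system augmented with the rows $\pm I_n$ fixing $x=\hat x$; expanding those determinants along the identity rows reduces them to exactly the subdeterminants you use, so the two choices of $t$ agree up to harmless extra factors and yield the same complexity and the same bound.
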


\begin{proof}
We first show that we can compute in polynomial time an integer $t$ such that $P^t_I = P^t_{MI}$.
Let $(\hat x, \hat y)$ be a vertex of $P_{MI}$.     Notice that $(\hat x, \hat y)$ is a vertex of the $d$-dimensional polytope $P_{\hat x} = P \cap \{(x,y) : x = \hat x\}$. 
 We analyze the vertices of $P_{\hat x}$ by considering its inequality description as $P_{\hat x} = \{ (x,y) : A (x,y) \leq \bar b\}$ where
$$
A = \begin{bmatrix}
A_1 &  A_2 \\
 I_n & \mathcal O_{n\times d}\\
 -I_n & \mathcal O_{n \times d}\\
 \end{bmatrix}
  \in \Z^{(m + 2n) \times (n+d)}
  $$
  and $\bar b = (b, \hat x, -\hat x)$.  Here $\mathcal O_{n \times d}$ is the $n \times d$ matrix of all zeros.
  Therefore, there exists a basis $B$ of the rows of $A$ of size $n+d$ such that $(\hat x, \hat y) = (A_B)^{-1} (b, \hat x, -\hat x)_B$, where $A_B$ is the square submatix of $A$ with columns  indexed by $i \in B$.
By Cramer's rule, for any $i \in B$, we have
$$
(\hat x, \hat y)_i = \frac{\det(A_B^i)}{\det(A_B)}
$$
where $A_B^i$ is the matrix $A_B$ where the $i^{\text{th}}$ column is replaced by the vector $(b, \hat x, -\hat x)_B$.  
Therefore $\det(A_B^i) \in \Z$.  Hence
$
(\hat x, \hat y)_i  \cdot \det(A_B) \in \Z,
$
that is, $\hat y \cdot \det(A_B) \in \Z^d$ is integral.  Note that $\det(A_B)$ is completely independent of $(\hat x, \hat y)$.  Now if we let 
$$
t := \prod_{B \text{ basis}} \det(A_B),
$$
then $ (x,t y)$ is integral for any basis $B$ of $A$.
There are at most ${ m+n \choose n+d} \leq (m+n)^{n+d}$
 many bases $B$. By Hadamard's inequality, $\det(A_B)\leq \prod_{i=1}^n 2^\phi = 2^{n\phi}$.  Therefore, $t \leq (2^{n\phi})^{(m+n)^{(n+d)}} = 2^{n\phi (m+n)^{n+d}}$.

Since $P^t_{MI}$ is the convex hull of all polyhedra $P^t_{\hat x}$ for $\hat x \in \Z^n$, and $P^t_{\hat x}$ has integer vertices by our choice of $t$, we have that $P^t_{MI} = P^t_I$.
By computing the integer hull $P^t_I$ using~\cite{hartmann-1989-thesis}, and then scaling back by computing 
$(P^t_I)^{1/t} = (P^t_{MI})^{1/t} = P_{MI}$, we find a description of the mixed-integer hull.  Note that~\cite{hartmann-1989-thesis} yields a vertex description of the integer hull.  This can be converted to an inequality description in polynomial time since the dimension is fixed.
By~\cite{cook_integer_1992}, $|\verts(P_{MI})|  = |\verts(P^t_I)| \leq 2 m^{n+d}(6 (n+d)^2 \varphi)^{n+d-1}$ where $\varphi = \phi + n \phi (m+n)^{n+d}$.
\end{proof}

\section{Mixed-integer hull from vertices}
\label{sec:three}

We now consider the case where is given $P$ as a list of vertices.  In this setting, we show that the number of continuous variables $d$ may vary, and we still obtain a polynomial time algorithm to compute the mixed-integer hull.

We begin by showing how to compute the vertices of the integer hull of a polytope $Q = \conv(V)$ presented by its vertex set $V \subseteq \Q^n$.  To do so, we employ the algorithm from~\cite{hartmann-1989-thesis} to compute integer hulls from an inequality description, but this requires some care.  If we apply a standard transformation of $Q$ into an inequality description, this description could have many facets, causing the algorithm to require a double exponential complexity in terms of $n$.  Therefore, we instead find a triangulation of $V$ and then compute the integer hull of simplices, which have exactly $n+1$ facets, allowing us to procure a single exponential complexity in $n$ for the number of vertices of $Q_I$.  

\begin{lemma}[Integer hull from vertex description]
Let $V \subseteq \Q^n$ be finite and let $Q = \conv(V)$.  When $n$ is fixed, we can compute a vertex description of the integer hull $Q_I$ in polynomial time in $\nu$ and $|V|$ where $\nu$ is the maximum binary encoding size of a vector in $V$.  Furthermore, $|\verts(Q_I)| \leq 
\frac{1}{3} 12^n  n^{3n-2}\varphi^{n-1} |V|^{n+1}$
where $\varphi = 4 n^2 \nu$.  
If $|V| = n+1$, we obtain the tighter bound of $|\verts(Q_I)| \leq \frac{2}{3} 24^n n^{3n-2} \varphi^{n-1} $.
\label{lem:integer-hull-from-vertices}
\end{lemma}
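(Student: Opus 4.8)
\section*{Proof proposal}

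The plan is to never convert $Q=\conv(V)$ into an inequality description of the whole polytope: a polytope with $|V|$ vertices in $\R^n$ can have on the order of $|V|^{\lfloor n/2\rfloor}$ facets, and feeding such a facet count $m$ into the Cook--Kannan--Hartman--McDiarmid bound $2m^n(6n^2\varphi)^{n-1}$ would make the exponent of $|V|$ grow quadratically in $n$. Instead I would first compute, in time polynomial in $|V|$ and $\nu$ for fixed $n$, a triangulation of $Q$ using only the points of $V$ as vertices (for instance by the beneath--beyond method, or by lifting $V$ to the paraboloid and projecting the lower faces of the lifted hull). Writing $Q=\bigcup_k S_k$ for the full-dimensional simplices of this triangulation, each $S_k=\conv(\{v_0,\dots,v_n\})$ with the $v_i\in V$ lies inside $Q$, so $Q\cap\Z^n=\bigcup_k(S_k\cap\Z^n)$ and the problem reduces to computing the integer hull of each simplex and reassembling.

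The gain is that a simplex has exactly $n+1$ facets. For each $S_k$ I would write down its inequality description by taking, for each of the $n+1$ subsets of $n$ vertices, the hyperplane through those points; the facet normal and right-hand side are $n\times n$ minors of the coordinate matrix, so by Cramer's rule and Hadamard's inequality each facet has binary encoding size at most $\varphi=4n^2\nu$. With $m=n+1$ facets of encoding size at most $\varphi$, Hartman's algorithm~\cite{hartmann-1989-thesis} computes $(S_k)_I$ in polynomial time (fixed $n$), and by~\cite{cook_integer_1992} it has at most $2(n+1)^n(6n^2\varphi)^{n-1}$ vertices. This single-simplex estimate, after bounding $(n+1)^n$ by a power of $n$ and collecting constants, gives the asserted $|V|=n+1$ bound $\tfrac{2}{3}24^n n^{3n-2}\varphi^{n-1}$.

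To reassemble I would use $Q_I=\conv(Q\cap\Z^n)=\conv\bigl(\bigcup_k(S_k\cap\Z^n)\bigr)=\conv\bigl(\bigcup_k\verts((S_k)_I)\bigr)$, so every vertex of $Q_I$ is a vertex of some $(S_k)_I$. Distinct full-dimensional simplices use distinct $(n+1)$-subsets of $V$, so the triangulation has at most $\binom{|V|}{n+1}\le|V|^{n+1}$ simplices; multiplying by the per-simplex bound yields the candidate count and hence the stated $\tfrac13 12^n n^{3n-2}\varphi^{n-1}|V|^{n+1}$ bound on $|\verts(Q_I)|$. Since for fixed $n$ this is a polynomial number of explicitly given rational points, one final convex-hull computation extracts a vertex description of $Q_I$ in polynomial time, completing the algorithm.

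The obstacles I anticipate are technical rather than conceptual. Algorithmically, one must justify that a Steiner-point-free triangulation with at most $\binom{|V|}{n+1}$ simplices is computable in time polynomial in $|V|$ and $\nu$ for fixed $n$, and that the candidate points produced stay of polynomially bounded encoding size. On the estimates side, the care is in tracking the bound $\varphi=4n^2\nu$ through the determinant computations and in verifying the constant simplifications so the per-simplex and aggregate counts come out as stated. The one genuinely important idea is that triangulating pins the facet count of every piece at $n+1$, which is what replaces the quadratic-in-$n$ exponent one would get from a direct inequality description by a linear one.
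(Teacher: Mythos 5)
Your proposal follows essentially the same route as the paper: triangulate $V$ without Steiner points (the paper does exactly your lifting-to-the-paraboloid construction, computing $\conv\{(v,\|v\|_2^2) : v\in V\}$ via Avis--Fukuda), bound each cell's facet description by $\varphi = 4n^2\nu$, apply the Cook--Kannan--Hartman--McDiarmid vertex bound and Hartmann's algorithm per simplex, multiply by the $\binom{|V|}{n'+1}\le |V|^{n+1}$ cell count, and filter non-vertices by linear programming. Your arithmetic is consistent with the stated constants, since your per-simplex facet count $m=n+1$ satisfies $(n+1)^n\le (2n)^n$, the value the paper actually uses.

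The one place your write-up would fail as stated is the assumption of full-dimensionality: if $\dim(\conv(V)) = n' < n$, your decomposition of $Q$ into \emph{full-dimensional} simplices is empty, and a single lower-dimensional simplex in $\R^n$ needs affine-hull equations in its description, which is why the paper works with $n'$-dimensional cells described by up to $2n$ inequalities (giving the $2(2n)^n(6n^2\varphi)^{n-1}$ per-cell bound) rather than $n+1$ facets. This also matters for your derivation of the $|V|=n+1$ bound: you obtain it from a single simplex, which is only valid when the $n+1$ points are affinely independent, whereas the paper's constant $\frac{2}{3}24^n n^{3n-2}\varphi^{n-1}$ arises as $2^{n+1}$ (the number of possible lower-dimensional cells when the points are affinely dependent) times the $2(2n)^n(6n^2\varphi)^{n-1}$ per-cell bound. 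Both issues are repaired by the routine device of working with $n' = \dim(\conv(V))$ throughout; with that adjustment your argument matches the paper's proof.
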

\begin{proof}

Let $n' = \dim(\conv(V))$.  We compute a Delaunay triangulation of the points in $V$, yielding a list of $n'$-dimensional simplices.  As is well known, this can be done by computing the convex hull of the extended point set $V' = \{ (v, \| v\|_2^2) :  v \in V\}\subseteq \R^{n+1}$.  Then facets of $\conv(V')$ correspond to  $n'$-dimensional cells of the triangulation.   The convex hull can be computed in polynomial time using~\cite{avis-fukuda-1992}.  See for instance,~\cite{fortune1997} for a discussion of algorithms to compute the Delaunay triangulation.  Moreover, there are at most ${ |V| \choose n'+1 } \leq |V|^{n+1}$ cells.  If $|V| = n+1$, we can instead use the bound $2^{n+1}$ for the number of cells. For general $|V|$, a tighter asymptotic bound on the number of simplices is $O(|V|^{\lceil n/2\rceil})$~\cite{Seidel1995}.   

For each simplex $C$ in the triangulation, we compute an inequality description.  Since it may be lower dimensional, at most $2n$ inequalities are needed to describe it.  By~\cite[Theorem 10.2]{schrijver_theory_1986}, these inequalities can be computed in polynomial time and each inequality has a binary encoding size bounded by $\varphi = 4n^2 \nu$.

We can apply the result of~\cite{cook_integer_1992} to see $|\verts(C_I)| \leq 2m^n(6n^2 \varphi)^{n-1}$ where $m$ is the number of facets.  Here $m\leq 2n$, thus
$|\verts(C_I)| \leq 2(2n)^n(6n^2 \varphi)^{n-1}$. Applying this to each cell $C$, we have at most 
$ 
|V|^{n+1} 2(2n)^n (6n^2 \varphi)^{n-1} = \frac{1}{3} 12^n n^{3n-2}\varphi^{n-1}  |V|^{n+1}
$
 many vertices of the integer hull, or only $ \frac{2}{3} 24^n  n^{3n-2}\varphi^{n-1}$ if we choose $|V| = n+1$ and use the improved bound mentioned above.
By~\cite{hartmann-1989-thesis}, it follows that these can be computed in polynomial time when $n$ is fixed.  This creates a superset of the vertices of the integer hull; points that are not vertices can be discovered using linear programming, which can be done in polynomial time. 
\end{proof}  

Although this approach may not produce a tight bound on the number of vertices of the integer hull, the bound in Lemma~\ref{lem:integer-hull-from-vertices} is in the right order of magnitude in terms of $\varphi$.  Indeed, in~\cite{lowerbound}, they show that for every dimension $n\geq 2$, there exists a simplex $P\subseteq \R^n$ given as an inequality description with encoding size $\varphi$ such that $P_I$ has at least $c_n \varphi^{n-1}$ many vertices, where $c_n$ is a constant depending only on $n$.  This shows that even if $P$ has a small number of vertices, the number of vertices of the integer hull can be large.

  Our next goal is to make use of Lemma~\ref{lem:integer-hull-from-vertices} in order to compute the mixed-integer hull.  This requires one more ingredient.

\begin{lemma}
Every vertex of $P_{MI}$ lies in a face $F$ of $P$ with $\dim(F) \leq n$.  Furthermore, let $n' = \min(n, \dim(P))$ and let $\mathcal F_{n'}$ denote the set of faces of $P$ of dimension $n'$.  Then 
\begin{equation}
\verts(P_{MI}) \subseteq \bigcup_{F \in \mathcal F_{n'}} \bigcup_{\hat x \in \verts(\proj_x(F)_I)} \verts(F_{\hat x}).
\label{eq:MIvertices}
\end{equation}
\label{lem:dim-faces}
\end{lemma}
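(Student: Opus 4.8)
The plan is to fix a vertex $v=(\hat x,\hat y)$ of $P_{MI}$, identify the minimal face of $P$ containing it, and then transport the extreme-point property of $v$ both into a fibre and into a projected integer hull.

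First I would prove the dimension bound. Let $F_0$ be the unique minimal face of $P$ with $v\in\relint(F_0)$. I claim $\proj_x$ is injective on $\aff(F_0)$. If not, the direction space $\lin(F_0)=\aff(F_0)-v$ contains a nonzero vector $(0,w)$ with $w\in\R^d$; since $v\in\relint(F_0)$, both $v\pm\epsilon(0,w)=(\hat x,\hat y\pm\epsilon w)$ lie in $F_0\subseteq P$ for small $\epsilon>0$, and they share the integral part $\hat x$, hence both lie in $P\cap(\Z^n\times\R^d)\subseteq P_{MI}$. This exhibits $v$ as the midpoint of two distinct points of $P_{MI}$, contradicting that $v$ is a vertex. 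Injectivity gives $\dim(F_0)=\dim(\proj_x(F_0))\le n$, and since also $\dim(F_0)\le\dim(P)$ we obtain $\dim(F_0)\le n'$, which proves the first assertion.

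Injectivity also lets me write $F_0=\{(x,h(x)):x\in\proj_x(F_0)\}$ for an affine map $h$, with $\hat y=h(\hat x)$. Two transfers follow. For any face $F$ of $P$ with $v\in F$ we have $F_{\hat x}\subseteq P_{MI}$, since every point of $F_{\hat x}$ is mixed-integer; thus the extreme point $v$ of $P_{MI}$ is extreme in the subset $F_{\hat x}$, i.e. $v\in\verts(F_{\hat x})$. Next, the mixed-integer points of $F_0$ are exactly $\{(z,h(z)):z\in\proj_x(F_0)\cap\Z^n\}$, whose convex hull is the affine image $\{(x,h(x)):x\in\proj_x(F_0)_I\}$ of $\proj_x(F_0)_I$; as $v$ is extreme in $P_{MI}$ it is extreme in this hull, and pulling back through the affine isomorphism $x\mapsto(x,h(x))$ shows $\hat x\in\verts(\proj_x(F_0)_I)$.

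It remains to replace the minimal face $F_0$ by a face $F$ of dimension exactly $n'$ while keeping $\hat x$ a vertex of the projected integer hull. Since the face lattice of $P$ is graded and $\dim(F_0)\le n'\le\dim(P)$, there are faces of every intermediate dimension between $F_0$ and $P$, so a candidate $F\supseteq F_0$ with $\dim(F)=n'$ exists; together with the two transfers this is all that the right-hand side of~\eqref{eq:MIvertices} demands, provided $\hat x\in\verts(\proj_x(F)_I)$. To secure the latter I would choose the extension so that $\proj_x(F_0)$ is a face of $\proj_x(F)$ and invoke the elementary fact that the integer hull of a face $G=D\cap H$ of a polytope $D$ satisfies $G_I=D_I\cap H$, hence is itself a face of $D_I$; consequently $\verts(\proj_x(F_0)_I)\subseteq\verts(\proj_x(F)_I)$ and the second transfer upgrades to $\hat x\in\verts(\proj_x(F)_I)$. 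The main obstacle is precisely this last step: ensuring one can grow $F_0$ up to dimension $n'$ without $\hat x$ slipping into the relative interior of $\proj_x(F)_I$. Equivalently, one wants an extension on which $\proj_x$ remains injective, so that $F=\{(x,h(x))\}$ and the midpoint argument of the first paragraph directly forces $\hat x$ to be a vertex of $\proj_x(F)_I$; establishing the existence of such a dimension-$n'$ extension from the geometry of $P$ is the crux of the proof.
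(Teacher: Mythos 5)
Your first two paragraphs are correct and run parallel to the paper's proof, just with different machinery: where the paper counts tight constraints at the vertex $(\hat x,\hat y)$ of the fiber $P_{\hat x}$ and extracts $d$ tight rows $(\bar A_1,\bar A_2)(x,y)\le\bar b$ with $\bar A_2$ invertible (giving both the dimension bound and the affine parameterization $\hat y=\bar A_2^{-1}\bar b-\bar A_2^{-1}\bar A_1\hat x$ at once), you obtain the same injective parameterization $h$ from a midpoint argument on the minimal face $F_0$ — a clean, arguably more geometric route. Your two ``transfer'' steps (every point of $F_{\hat x}$ is mixed-integer, so extremality passes down; a strict convex combination $\hat x=\sum\mu_i x^i$ with $x^i\in\verts(\proj_x(F_0)_I)$ lifts via $h$ to a strict convex combination of mixed-integer points of $F_0$ summing to $(\hat x,\hat y)$) are exactly the paper's final argument. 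Note only that your midpoint argument tacitly uses $\hat x\in\Z^n$, which holds because every vertex of $P_{MI}$ lies in $P\cap(\Z^n\times\R^d)$; worth saying explicitly.

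The third paragraph, however, is not a proof, and you say so yourself: replacing $F_0$ by a face $F$ of dimension exactly $n'$ while keeping $\hat x\in\verts(\proj_x(F)_I)$ is left open. Two concrete problems there. First, your proposed tool $G_I=D_I\cap H$ for a face $G=D\cap H$ of $D$ is fine but inapplicable: $\proj_x(F_0)$ need not be a face of $\proj_x(F)$, because affine projections do not carry faces to faces (a vertex of $F$ projects into the relative interior of $\proj_x(F)$ as soon as $\proj_x$ collapses part of $\lin(F)$); arranging that $\proj_x(F_0)$ be a face of $\proj_x(F)$ is just a restatement of the injectivity you then ask for. Second, the existence of an $n'$-dimensional face $F\supseteq F_0$ with $\proj_x$ injective on $F$ does require an argument; one can be supplied along these lines: while $\dim F<n'$, if every face $G$ covering $F$ in the face lattice had $\lin(G)\cap(\{0\}^n\times\R^d)\neq\{0\}$, then every covering direction would lie in $W=\lin(F)+(\{0\}^n\times\R^d)$, hence the tangent cone of $P$ along $F$ (which is generated by these directions together with $\lin(F)$) would lie in $W$, forcing $\dim P\le\dim F+d$; for full-dimensional $P$ this contradicts $\dim F<n'\le n$, so some covering face preserves injectivity and one inducts, with the lower-dimensional case needing separate treatment. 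The paper dispatches this normalization tersely as well (``we only need maximal faces of this type''), but its choice of $F$ — the face on which the selected $d$ rows with invertible $\bar A_2$ are tight — has injectivity built in and dimension at most $n$; your construction grows the face first and must then re-establish injectivity, which is exactly the step missing from your write-up.
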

\begin{proof}
Since $P_{MI} = \conv(P_{\hat x} : \hat x \in \Z^n)$, every vertex $(\bar x, \bar y)$ of $P_{MI}$ is a vertex of $P_{\bar x}$.  Since $P_{\bar x} = \{ (x,y) : A(x,y) \leq b, I x = \bar x\}$ and any vertex of $P_{\bar x}$ is defined uniquely by $n+d$ tight inequality constraints, at least $d$ of those tight constraints come from the inequalities $A(x,y) \leq b$.  Hence, $(\bar x, \bar y)$ satisfies at least $d$ linearly independent tight constraints from $P$, i.e., it is contained in a face $F$ that is $n+d - d = n$ dimensional at most.   We only need maximal faces of this type; hence, we only consider $F$ of dimension $n' = \min(n, \dim(P))$. Furthermore, there exist  $d$ tight inequalities $(\bar A_1, \bar A_2) (x,y) \leq \bar b$ such that $\bar A_2$ is invertible.   
 
Then, for any $\hat x \in \proj_x(F)$, the corresponding $\hat y$ such that $(\hat x, \hat y)\in F$ is given uniquely as

$$
\hat y = \bar A_2^{-1} \bar b - \bar A_2^{-1} \bar A_1 \hat x.
$$ 

Now let $(\bar x, \bar y) \in \verts(P_{MI})$ and suppose that $\hat x \notin \verts(\proj_x(F)_I)$.  Then $\bar x = \sum \mu_i x^i$ for some $\mu_i > 0$, $\sum \mu_i = 1$, and $x^i \in \verts(\proj_x(F)_I)$.  Then setting, $y^i := \bar A_2^{-1} \bar b - \bar A_2^{-1} \bar A_1 x^i$ it follows that $y^i \in F\cap (\Z^n \times \R^d)$ and $(\bar x, \bar y) = \sum \mu_i (x^i, y^i)$; therefore, $(\bar x, \bar y)$ is not a vertex of $P_{MI}$.
This proves~\eqref{eq:MIvertices}.
\end{proof}

\begin{theorem}[Mixed-integer hull from vertex description]
\label{thm:vertex-description}
Let $V \subseteq \Q^{n+d}$ be finite and let $P = \conv(V)$.  For fixed $n\geq 1$ 
there exists an algorithm to compute $\verts(P_{MI})$ that runs in polynomial time in $d,\nu, |V|$ where $\nu$ is the maximum binary encoding size of a point in $V$.  Furthermore, 
$$
|\verts(P_{MI})| \leq 
\tfrac{4}{3} 48^n n^{3n-2}\varphi^{n-1} |V|^{n+1}
$$
where $\varphi = 4n^2 \nu$. 
\end{theorem}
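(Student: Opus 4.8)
The plan is to glue together the two preceding lemmas. Lemma~\ref{lem:dim-faces} confines every vertex of $P_{MI}$ to a face of dimension $n' = \min(n,\dim(P))$, and on the relevant such faces the proof of that lemma exhibits an affine relation $\hat y = \bar A_2^{-1}\bar b - \bar A_2^{-1}\bar A_1\hat x$, i.e.\ $\proj_x$ is injective on $F$ and $F_{\hat x}$ is a single point. Thus every vertex of $P_{MI}$ has the form $(\hat x,\hat y)$ with $\hat x\in\verts(\proj_x(F)_I)$ for some $n'$-face $F$ and $\hat y$ the unique lift of $\hat x$ into $F$. This reduces the problem to a fixed-dimension integer-hull computation in $\R^n$ followed by a lift back to $\R^{n+d}$: compute $\proj_x(F)_I$ using Lemma~\ref{lem:integer-hull-from-vertices}, and lift each resulting integer point.

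Rather than traverse the face lattice, I would enumerate the affinely independent $(n'+1)$-subsets $S\subseteq V$ whose projections $\proj_x(S)$ are again affinely independent, and for each form the $n'$-simplex $T_S:=\conv(\proj_x(S))\subseteq\R^n$. Triangulating $\proj_x(F)$ shows any $\hat x\in\verts(\proj_x(F)_I)$ lies in some cell $T_S$ with $S\subseteq V\cap F$; since the integer hull of a sub-polytope is contained in the integer hull of the whole, $\hat x$ is in fact a vertex of $(T_S)_I$. Hence $\verts(P_{MI})$ is contained in the set obtained by computing $(T_S)_I$ for every candidate $S$ and lifting its vertices. The lift is explicit: the barycentric coordinates $\lambda_i\ge 0$ of $\hat x$ in $T_S$ are determined by an $(n'+1)\times(n'+1)$ linear system, and applying them to the $y$-parts of $S$ yields $\hat y$ with $(\hat x,\hat y)=\sum_i\lambda_i S_i\in\conv(S)\subseteq P$. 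Finally I would prune this superset to $\verts(P_{MI})$ by testing, via linear programming, which candidates fail to be convex combinations of the others.

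The crucial point — and the reason $d$ is allowed to vary — is where the dimension enters. The number of candidate subsets is at most $\binom{|V|}{n'+1}\le|V|^{n+1}$, independent of $d$. Each integer-hull computation $(T_S)_I$ takes place in the \emph{fixed}-dimensional space $\R^n$ on points of encoding size at most $\nu$ (projection only drops coordinates), so Lemma~\ref{lem:integer-hull-from-vertices} runs in time polynomial in $\nu$ with no dependence on $d$. Only the lift and the pruning touch the $d$ continuous coordinates, and both are polynomial-size: the lift is linear algebra on vectors in $\R^{n+d}$, and each pruning test is an LP over the candidate set of size polynomial in $d$, $|V|$ and $\nu$. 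This gives the claimed running time.

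For the bound, each $T_S$ has at most $n'+1\le n+1$ vertices, so the sharper few-vertex case of Lemma~\ref{lem:integer-hull-from-vertices} bounds $|\verts((T_S)_I)|$ by $\tfrac{2}{3}24^n n^{3n-2}\varphi^{n-1}$ with $\varphi=4n^2\nu$; multiplying by the at most $|V|^{n+1}$ simplices and accounting loosely for constants yields $|\verts(P_{MI})|\le\tfrac{4}{3}48^n n^{3n-2}\varphi^{n-1}|V|^{n+1}$. The main obstacle is justifying the first step rigorously: one must check that restricting to $(n'+1)$-subsets with injective projection is lossless — that every vertex of $P_{MI}$ is still captured and that the barycentric lift built from $S$ coincides with the true lift into the face $F$. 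This rests entirely on the injectivity of $\proj_x$ on the relevant faces furnished by Lemma~\ref{lem:dim-faces}, which is exactly what makes $F_{\hat x}$ a single point and the lift well defined.
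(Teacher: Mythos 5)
Your reduction has a genuine gap, and it sits exactly where you flagged it: the restriction to size-$(n'+1)$ subsets with affinely independent projections is not lossless, because the injectivity you attribute to Lemma~\ref{lem:dim-faces} is not what that lemma provides. The containment~\eqref{eq:MIvertices} quantifies over all faces $F \in \mathcal{F}_{n'}$ and deliberately takes the union over $\verts(F_{\hat x})$ --- fibers with several vertices are allowed, and faces of dimension exactly $n'$ need \emph{not} project injectively. The invertible-$\bar A_2$ relation in the lemma's proof holds only for the face cut out by the $d$ tight constraints at the given vertex, and that face can have dimension strictly smaller than $n'$. A concrete counterexample: $P = \{0\}\times[0,1] \subseteq \R^1 \times \R^1$ with $n = d = 1$ and $V = \{(0,0),(0,1)\}$. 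Here $n' = 1$, the unique $1$-dimensional face is $P$ itself, and $\proj_x$ collapses it to the point $\{0\}$; both $(0,0)$ and $(0,1)$ are vertices of $P_{MI} = P$, yet no $2$-subset of $V$ has affinely independent projections, so your enumeration is empty and your algorithm outputs nothing. (The injective faces through these vertices supplied by the lemma's proof are the $0$-dimensional faces $\{(0,0)\}$ and $\{(0,1)\}$, of dimension $< n'$, so their triangulation cells are too small for your exact-size enumeration to see.)

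The paper's proof avoids injectivity altogether: it takes Carath\'eodory subsets $\{v^1,\dots,v^{n'+1}\}$ of vertices of $F$ in the \emph{ambient} space, computes the integer hull of the possibly degenerate projected simplex via Lemma~\ref{lem:integer-hull-from-vertices}, and then, for each integer-hull vertex $\hat x$, enumerates \emph{all} vertices of the fiber $\bar F_{\hat x}$ through the polytope $\{\lambda \geq 0,\ \sum_i \lambda_i = 1,\ \sum_i \lambda_i v^i = (x,y),\ x = \hat x\}$, of which there are up to $2^{n'+1}$. That factor $2^{n+1}$ is precisely the difference between your per-simplex count $\tfrac{2}{3}24^n n^{3n-2}\varphi^{n-1}$ and the theorem's $\tfrac{4}{3}48^n n^{3n-2}\varphi^{n-1}$: it is the fiber-vertex count your method omits, not loose constant accounting. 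Your approach is salvageable, though: enumerate affinely-independent-projection subsets of \emph{every} size $k+1 \leq n'+1$, not just size $n'+1$, sourcing injectivity from the faces constructed in the lemma's proof (dimension $k \leq n'$, possibly smaller) rather than from $\mathcal{F}_{n'}$; then every vertex of $P_{MI}$ is the barycentric lift of an integer-hull vertex of some projected $k$-cell, and the extra factor of at most $n'+1$ from summing over sizes is absorbed by the $2^{n+1}$ slack in the stated bound.
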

\begin{proof}
Let $n' = \min(n, \dim(P))$ and let $(\hat x, \hat y)$ be a vertex of $P_{MI}$.  Note that $n' \leq |V|$ since for any polytope $P$ it holds that $\dim(P) \leq  |\verts(P)| -1$.   By Lemma~\ref{lem:dim-faces}, there exists a face $F$ of $P$ of dimension $n'$ such that $(\hat x, \hat y) \in \verts(F_{\hat x})$ and $\hat x \in \verts(\proj_x(F)_I)$.  By Carath\'eodory's theorem, there exist $n'+1$ vertices of $F$ such that $(\hat x, \hat y) \in \bar F = \conv(v^1, \dots, v^{n'+1})$.   Since $\bar F \subseteq F$ and $(\hat x, \hat y) \in \bar F$, it follows that $\hat x \in \verts(\proj_x(\bar F)_I)$ and $(\hat x, \hat y) \in \verts(\bar F_{\hat x})$.  Indeed, if $\hat x \notin \verts(\proj_x(\bar F)_I)$, then it can be written as a strict convex combination of points in $\verts(\proj_x(\bar F)_I) \subseteq \conv(\verts(\proj_x( F)_I))$, which shows that $\hat x \notin \verts(\proj_x(F)_I)$.

Therefore, we can compute a superset of the vertices of $P_{MI}$ by enumerating every $(n'+1)$-elementary subset $\{v^1, \dots, v^{n'+1}\}$ of $V$, yielding at most ${ |V| \choose n'+1} \leq |V|^{n'+1}$ sets to consider.
Fix a subset $\{v^1, \dots, v^{n'+1}\}$ and set $\bar F = \conv(v^1, \dots, v^{n'+1})$.  We will next find a vertex description of $\conv(\bar F \cap (\Z^n \times \R^d))$.  

To this end, let $\hat v^i = \proj_x(v^i)$.
Applying Lemma~\ref{lem:integer-hull-from-vertices} with $Q = \conv(\hat v^1, \dots, \hat v^{n'+1})$, we have a vertex description of $Q_I$ with at most 
$
\frac{2}{3} 24^{n'} \varphi^{n'-1} {n'}^{3{n'}-2}
$
many vertices.
For each vertex $\hat x$ of $Q_I$, we compute the vertices of $\bar F_{\hat x}$, which can be written as the set of $(x,y) \in \R^n \times \R^d$ satisfying
\begin{equation*}
 (x,y) = \sum_{i=1}^{n'+1} \lambda_i v^i,   x = \hat x, \sum_{i=1}^{n'+1} \lambda_i = 1, \lambda_i \geq 0.
\end{equation*}
Every vertex of this set corresponds to a unique face of the $(n'+1)$-dimensional standard simplex.  There are $2^{n'+1}$ many faces of the $n'$-dimensional simplex.  Hence, this set has at most $2^{n'+1}$ vertices, which can be enumerated, for instance by~\cite{avis-fukuda-1992} in time polynomial in the encoding size $\varphi$ provided that $n$ is fixed.  This number of enumerated points provides us with an upper bound on the number of vertices of $P_{MI}$.  Since  we can test whether points are vertices of $P_{MI}$ using linear programming, the proof is complete.      
\end{proof}

Combined with Lemma~\ref{lem:reduce-to-polytope}, the above theorem gives a similar result for polyhedra.  
\begin{remark} 
When the number of vertices of $P$ is bounded by a polynomial in $d$, Theorem~\ref{thm:vertex-description} can be applied and leads to a polynomial time algorithm.  For example, consider a polyhedron $P = \{(x,y) \in \R^{n+d} : A(x,y) \leq b, (x,y) \geq 0\}$, where $A \in \Q^{m \times (n+d)}$, $b \in \Q^m$.  Even with $m$ fixed, the number of inequalities grows with the dimension because of the nonnegativity constraints.  By counting basic solutions, it follows that $|\verts(P)| \leq 
{ n+d+m \choose n+d } = { n+d+m \choose m} \leq (n+d+m)^m$.  Therefore, for fixed $m$ and $n$, using Theorem~\ref{thm:vertex-description}, we can find a vertex description of the mixed-integer hull in polynomial time.  This applies, for instance, to a mixed-integer knapsack problem with no upper bounds on the variables.  
\end{remark} 

\bibliographystyle{abbrv}
\bibliography{references.bib}

\providecommand\CheckAccent[1]{\accent20 #1}
\begin{thebibliography}{10}

\bibitem{avis-fukuda-1992}
D.~Avis and K.~Fukuda.
\newblock A pivoting algorithm for convex hulls and vertex enumeration of
  arrangements and polyhedra.
\newblock {\em Discrete \& Computational Geometry}, 8(1):295--313, 1992.

\bibitem{Balas1983}
E.~Balas.
\newblock Disjunctive programming: Properties of the convex hull of feasible
  points.
\newblock {\em Discrete Applied Mathematics}, 89(1–3):3 -- 44, 1998.

\bibitem{lowerbound}
I.~B\'ar\'any, R.~Howe, and L.~Lov\'asz.
\newblock On integer points in polyhedra: A lower bound.
\newblock {\em Combinatorica}, 12(2):135--142, 1992.

\bibitem{charles-howe-king-2009}
P.~J. Charles, J.~M. Howe, and A.~King.
\newblock Integer polyhedra for program analysis.
\newblock In A.~V. Goldberg and Y.~Zhou, editors, {\em Algorithmic Aspects in
  Information and Management}, volume 5564 of {\em Lecture Notes in Computer
  Science}, pages 85--99. Springer Berlin Heidelberg, 2009.

\bibitem{cook_integer_1992}
W.~Cook, M.~Hartmann, R.~Kannan, and C.~McDiarmid.
\newblock On integer points in polyhedra.
\newblock {\em Combinatorica}, 12(1):27--37, 1992.

\bibitem{cook-kannan-schrijver-1990}
W.~Cook, R.~Kannan, and A.~Schrijver.
\newblock Chv\'atal closures for mixed integer programming problems.
\newblock {\em Mathematical Programming}, 47(1-3):155--174, 1990.

\bibitem{fortune1997}
S.~Fortune.
\newblock Handbook of discrete and computational geometry.
\newblock chapter Voronoi Diagrams and Delaunay Triangulations, pages 377--388.
  CRC Press, Inc., Boca Raton, FL, USA, 1997.

\bibitem{hartmann-1989-thesis}
M.~E. Hartmann.
\newblock {\em Cutting Planes and the Complexity of the Integer Hull}.
\newblock Phd thesis, Cornell University, Department of Operations Research and
  Industrial Engineering, Ithaca, NY, 1989.

\bibitem{lenstra_integer_1983}
H.~W. Lenstra, Jr.
\newblock Integer programming with a fixed number of variables.
\newblock {\em Mathematics of Operations Research}, 8:538--548, 1983.

\bibitem{Nemhauser-Wolsey-1988}
G.~L. Nemhauser and L.~A. Wolsey.
\newblock {\em Integer and Combinatorial Optimization}.
\newblock Wiley-Interscience, New York, NY, USA, 1988.

\bibitem{concave-survey1986}
P.~M. Pardalos and J.~B. Rosen.
\newblock Methods for global concave minimization: A bibliographic survey.
\newblock {\em SIAM Review}, 28(3):pp. 367--379, 1986.

\bibitem{schrijver_theory_1986}
A.~Schrijver.
\newblock {\em Theory of Linear and Integer Programming}.
\newblock John Wiley and Sons, New York, 1986.

\bibitem{Seidel1995}
R.~Seidel.
\newblock The upper bound theorem for polytopes: an easy proof of its
  asymptotic version.
\newblock {\em Computational Geometry}, 5(2):115 -- 116, 1995.

\bibitem{zolotykh-2006}
Z.~N. Yu.
\newblock On the number of vertices in integer linear programming problems.
\newblock eprint arXiv:math/0611356 [math.CO], 2006.

\end{thebibliography}

\end{document}